\newtheorem{theorem}{Theorem}[section]
\newtheorem{corollary}[theorem]{Corollary}
\newtheorem{lemma}[theorem]{Lemma}
\newtheorem{proposition}[theorem]{Proposition}
\newtheorem{example}[theorem]{Example}
\numberwithin{equation}{section}
\author{Mónica Clapp\footnote{M. Clapp was partially supported by UNAM-DGAPA-PAPIIT grant IN100718 (Mexico), CONACYT grant A1-S-10457 (Mexico).}, Angela Pistoia and Tobias Weth}
\title{An upper bound for the least energy of a nodal solution to the Yamabe equation on the sphere}
\date{\today}
\begin{document}

\maketitle

\begin{abstract}

For each $n\geq 3$ we establish the existence of a nodal solution $u$ to the Yamabe problem on the round sphere $(\mathbb{S}^n,g)$ which satisfies
$$\int_{\mathbb{S}^n}|u|^{2^*}dV_g < 2m_n\mathrm{vol}(\mathbb{S}^n),$$
where $m_3=9,$ $m_4= 7,$ $m_5=m_6=6$, and  $m_n= 5\ \text{if }n\geq 7.$ \medskip

\noindent\textbf{Keywords:} 
Yamabe equation, nodal solutions, least energy  

\noindent\textbf{MSC2010:}  58J05, 35B06, 35B33
\end{abstract}

\section{Introduction} \label{sec:introduction}

We consider the Yamabe problem
\begin{equation} \label{eq:1}
\frac{4(n-1)}{n-2}\Delta_g u + n(n-1)u= n(n-1)|u|^{2^*-2}u\qquad\text{on }\mathbb{S}^n,
\end{equation}
 on the round $n$-sphere $(\mathbb{S}^n,g)$, $n\geq 3$, where $\Delta_g u=-\mathrm{div}_g\nabla_g$ is the Laplace-Beltrami operator and $2^*:=\frac{2n}{n-2}$ is the critical Sobolev exponent. 

The existence of positive and sign-changing solutions to this problem is well known. Different types of nodal solutions have been exhibited in \cite{c,d,dmpp,fp}. 

It is easily observed that any nodal solution $u$ of \eqref{eq:1} satisfies
\begin{equation}
  \label{eq:struwe-nodal-est}
\int_{\mathbb{S}^n}|u|^{2^*}dV_g >2\mathrm{vol}(\mathbb{S}^n),  
\end{equation}
see e.g. \cite[Chapter III.3]{struwe}. This estimate has been slightly improved in \cite{w}, where it has been proved that 
\begin{equation}
  \label{eq:weth-est}
\inf \Bigl \{\:\int_{\mathbb{S}^n}|u|^{2^*}dV_g \::\: \text{$u$ nodal solution of (\ref{eq:1})} \: \Bigr\} >2\mathrm{vol}(\mathbb{S}^n).
\end{equation}
We note that (\ref{eq:weth-est}) is not a direct consequence of (\ref{eq:struwe-nodal-est}), since it is unknown if the infimum in (\ref{eq:weth-est}) is attained.
 
Estimates for the least energy of nodal solutions to problem \eqref{eq:1} are of interest, since they are related to compactness properties of semilinear elliptic boundary value problems with critically growing nonlinearities via Struwe's compactness lemma. See \cite[Chapter III.3]{struwe} for a discussion of this aspect.    

The aim of this note is to give an upper bound for the least energy of a nodal solution to problem \eqref{eq:1}. Set
\begin{equation} \label{eq:m(n)}
m_n:=
\begin{cases}
9 &\text{if }n=3,\\
7 &\text{if }n=4,\\
6 &\text{if }n=5,6,\\
5 &\text{if }n\geq 7.
\end{cases}
\end{equation}
We prove the following result.

\begin{theorem} \label{thm:main}
The Yamabe equation \eqref{eq:1} has a nodal solution $u$ which satisfies
\begin{align*}
u(z_1,z_2,x) &= u(\mathrm{e}^{2\pi\mathrm{i}/m_n}z_1,\mathrm{e}^{2\pi\mathrm{i}/m_n}z_2,x)\\
u(z_1,z_2,x) &= -u(-\bar{z}_2,\bar{z}_1,x),
\end{align*}
for all $(z_1,z_2,x)\in\mathbb{C}\times\mathbb{C}\times\mathbb{R}^{n-3}\equiv\mathbb{R}^{n+1}$, and
$$\int_{\mathbb{S}^n}|u|^{2^*}dV_g < 2m_n\mathrm{vol}(\mathbb{S}^n).$$
\end{theorem}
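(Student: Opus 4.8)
The plan is to obtain $u$ variationally, as a least–energy critical point of the Yamabe functional restricted to a space of symmetric functions, and to control its energy from above by a family of opposite–signed bubbles. To begin, encode the two symmetries as a group action. Let $\rho(z_1,z_2,x)=(\mathrm e^{2\pi\mathrm i/m_n}z_1,\mathrm e^{2\pi\mathrm i/m_n}z_2,x)$ and $\tau(z_1,z_2,x)=(-\bar z_2,\bar z_1,x)$, and let $G=\langle\rho,\tau\rangle$ be the group they generate; a direct computation gives $\tau^2=-\mathrm{id}$ on $\mathbb C^2$ and $\tau\rho\tau^{-1}=\rho^{-1}$, so $G$ is finite. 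Define the character $\phi\colon G\to\{\pm1\}$ by $\phi(\rho)=1$, $\phi(\tau)=-1$, and set
\[ H_\phi:=\bigl\{u\in H^1(\mathbb S^n): u\circ g=\phi(g)\,u\ \text{for all } g\in G\bigr\}. \]
Writing $J(u)=\tfrac12\int_{\mathbb S^n}\bigl(\tfrac{4(n-1)}{n-2}|\nabla_g u|^2+n(n-1)u^2\bigr)\,dV_g-\tfrac{n(n-1)}{2^*}\int_{\mathbb S^n}|u|^{2^*}\,dV_g$ for the functional associated with \eqref{eq:1}, the $\phi$–twisted $G$–action leaves $J$ invariant, so by the principle of symmetric criticality every critical point of $J|_{H_\phi}$ solves \eqref{eq:1}. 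Because $\phi(\tau)=-1$ forces $u=-u\circ\tau$, each nonzero $u\in H_\phi$ changes sign and is therefore nodal; moreover it carries precisely the two equivariances required in the statement. On the Nehari manifold $\int_{\mathbb S^n}|u|^{2^*}\,dV_g$ is a fixed positive multiple of $J(u)$, so it is enough to bound the least symmetric critical level.

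Next I would set up the existence scheme. By conformal invariance every positive solution of \eqref{eq:1} — the constant $u\equiv1$ and every Aubin–Talenti bubble transplanted to $\mathbb S^n$ — has the same mass $\int_{\mathbb S^n}|U|^{2^*}\,dV_g=\mathrm{vol}(\mathbb S^n)$. Since $G$ acts by isometries, a Palais–Smale sequence in $H_\phi$ can only lose compactness by concentrating along a full $G$–orbit with signs prescribed by $\phi$; adapting Struwe's profile decomposition to the $G$–action therefore yields the equivariant Palais–Smale condition below the first such symmetric concentration level, itself a positive multiple of $\mathrm{vol}(\mathbb S^n)$ fixed by the orbit structure. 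This is the step that upgrades an energy bound into an attained solution.

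For the upper bound I would use comparison functions in $H_\phi$ modelled on a symmetric superposition of bubbles of alternating sign — half positive, half negative — placed in accordance with $\phi$ and concentrating at a common rate $\delta\to0$, so that the leading order of $\int_{\mathbb S^n}|u_\delta|^{2^*}\,dV_g$ equals $2m_n\,\mathrm{vol}(\mathbb S^n)$. Expanding $J(u_\delta)$ to second order, the bubble–bubble interactions contribute terms of order $\delta^{(n-2)/2}$ weighted by the products of the neighbouring signs and by inverse $(n-2)$–th powers of the mutual distances in the configuration.

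The heart of the matter — and the main obstacle — is this interaction estimate. One must show that for the alternating arrangement the attractive opposite–sign contributions outweigh the repulsive same–sign ones, so that the reduced energy possesses a critical point at which $J$, hence $\int_{\mathbb S^n}|u|^{2^*}\,dV_g$, lies strictly below $2m_n\,\mathrm{vol}(\mathbb S^n)$. This amounts to the positivity of an explicit trigonometric lattice sum over the configuration, whose sign is governed by the competition between the decay exponent $n-2$ and the number of bubbles; it is exactly this competition that dictates how large $m_n$ must be in each dimension and yields the values $9,7,6,5$ in \eqref{eq:m(n)}. Once this strict inequality is in hand it does double duty: it places the least symmetric level below the compactness threshold, so that the level is attained by some $u\in H_\phi$, and it simultaneously delivers $\int_{\mathbb S^n}|u|^{2^*}\,dV_g<2m_n\,\mathrm{vol}(\mathbb S^n)$, while the prescribed symmetries of $u$ hold by construction.
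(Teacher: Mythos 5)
Your overall strategy coincides with the paper's: minimize the Yamabe functional on the $\phi$-equivariant Nehari manifold, recover compactness below the first equivariant concentration level $(\#\Gamma)\,c_n=2m_n c_n$ (this is Corollary \ref{cor:existence}), and push the least equivariant level below that threshold with a test function built from the $\Gamma$-orbit of a bubble with signs prescribed by $\phi$ (Proposition \ref{prop:mu}). Two points, however, are wrong or missing. First, the sign of the interaction condition is reversed in your outline. Expanding the quotient $Y_n(w_\beta)$ for $w_\beta=\sum_j u_{j,\beta}-\sum_j u_{m+j,\beta}$, a same-sign pair contributes a cross term entering the $L^{2^*}$-norm with the factor $2^*$ but the $H^1$-norm only with the factor $2$, so same-sign interactions \emph{lower} the quotient while opposite-sign interactions \emph{raise} it. The condition in Proposition \ref{prop:mu} is accordingly $\mu_p-\widehat{\mu}_p>0$: the same-sign interactions must dominate. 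This is also why the relevant configuration is not an ``alternating arrangement'': the orbit of $p=(1,0,0)$ places all $m$ positive bubbles on one great circle, where they cluster as $m$ grows, and all $m$ negative bubbles on an orthogonal circle at fixed distance $\pi/2$, so that $\widehat{\mu}_p=m^2$ stays controlled while $\mu_p$ grows faster. With the inequality as you state it (opposite-sign attraction dominating the same-sign repulsion) the quotient would move the wrong way, and the mechanism would moreover suggest that \emph{small} $m$ suffices, whereas the whole point is that $m$ must be at least $m_n$.

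Second, the entire quantitative content of the theorem --- the specific values $m_3=9$, $m_4=7$, $m_5=m_6=6$ and $m_n=5$ for $n\ge 7$ --- resides in verifying that $a_{n,m}=\sum_{j=1}^{m-1}\bigl(\sqrt{2}\sin\frac{\pi j}{m}\bigr)^{2-n}-m>0$ precisely for $m\geq m_n$. You assert that ``the competition yields the values $9,7,6,5$'' but do not carry this out. In the paper this is Lemma \ref{lem:mu}, which requires a separate elementary but nontrivial analysis in each regime: a monotonicity reduction plus explicit evaluation of $a_{5,5}$, $a_{5,6}$, $a_{6,5}$ for $n\ge 5$, an exact closed form for $n=4$, and an induction on $m$ (based on the claim \eqref{eq:claim}) for $n=3$. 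Without this step the theorem with its explicit constants is not proved; with the interaction inequality oriented as in your outline, it could not be.
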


The solution given by Theorem \ref{thm:main} might be the same as the one obtained in \cite[Theorem 1.1]{c} for $n\geq 4$, but it is different from those obtained by Ding in \cite{d}, as shown in Proposition \ref{prop:noding} below. Estimates for the energy of some of Ding's solutions are listed in \cite{fp}, but no information is given which allows to verify them.

Our approach is as follows: First, we give a condition for the existence of a least energy solution to the Yamabe problem \eqref{eq:1} with a particular type of symmetries (see Corollary \ref{cor:existence}). The symmetries are chosen in such a way that they yield sign-changing solutions by construction. Then, we estimate the energy of a specific ansatz  and derive an explicit condition on the symmetries which guarantees the validity of the requirement \eqref{eq:simple} in Corollary \ref{cor:existence} (see Proposition \ref{prop:mu}). Finally, we prove that the condition on the symmetries holds true for the particular example that gives rise to Theorem \ref{thm:main}.

\subsection*{Acknowledgement}

The third author wishes to thank Farid Madani for valuable contributions. Moreover, he wishes to thank Andrea Malchiodi and Fr\'ed\'eric Robert for helpful discussions on the topic.

\section{Symmetric nodal solutions}

The group $O(n+1)$ of linear isometries of $\mathbb{R}^{n+1}$ acts isometrically on $\mathbb{S}^n$. We fix a closed subgroup $\Gamma$ of $O(n+1)$ and, as usual, we denote by
$$\Gamma p:=\{\gamma p:\gamma\in\Gamma\}\qquad\text{ and }\qquad\Gamma_p:=\{\gamma\in\Gamma:\gamma p=p\}$$
the $\Gamma$-orbit and the $\Gamma$-isotropy subgroup of a point $p$ in $\mathbb{S}^n$. Recall that $\Gamma p$ is $\Gamma$-diffeomorphic to the homogeneous space $\Gamma/\Gamma_p$. So, they have the same cardinality, i.e., $\#\Gamma p=|\Gamma/\Gamma_p|$, the index of $\Gamma_p$ in $\Gamma$.

Let $\phi:\Gamma\to\mathbb{Z}_2:=\{1,-1\}$ be a continuous homomorphism of groups. We shall look for solutions $u:\mathbb{S}^{n}\to\mathbb{R}$ to the Yamabe equation \eqref{eq:1} which satisfy
\begin{equation} \label{eq:equivariant}
u(\gamma p)=\phi(\gamma)u(p)\qquad\forall\gamma\in\Gamma,\quad p\in \mathbb{S}^{n}. 
\end{equation}
A function $u$ with this property will be called $\phi$\emph{-equivariant}. It might occur that the only function $u$ satisfying \eqref{eq:equivariant} is the trivial function. This happens, e.g., if $\Gamma=O(n+1)$ and $\phi(\gamma)$ is the determinant of $\gamma$. To avoid this bad behavior, we will assume, from now on, that $\phi$ satisfies the following assumption
\begin{itemize}
\item[$(A_0)$] There exists $p_0\in\mathbb{S}^{n}$ such that $\Gamma_{p_0}\subset\ker\phi=:G$.
\end{itemize}
This assumption guarantees that the space
$$H^1_g(\mathbb{S}^{n})^{\phi}:=\{u\in H^1_g(\mathbb{S}^{n}):u\text{ is }\phi\text{-equivariant}\}$$
is infinite dimensional; see \cite{bcm}.

If $\phi\equiv 1$, then \eqref{eq:equivariant} simply says that $u$ is a $\Gamma$-invariant function. On the other hand, if $\phi$ is surjective and $u$ is nontrivial, then \eqref{eq:equivariant} implies that $u$ is sign-changing and $G$-invariant, where $G=\ker\phi$.

Set $a_n:=\frac{n(n-2)}{4}$. We take
\begin{equation}\label{eq:norms}
\|u\|:=\left(\int_{\mathbb{S}^{n}}\left[|\nabla_{g}u|_{g}^{2}+a_nu^{2}\right]dV_{g}\right)^\frac{1}{2},\quad|u|_{2^*}:=\left(\int_{\mathbb{S}^{n}}a_n|u|^{2^{\ast}}dV_{g}\right)^\frac{1}{2^*}
\end{equation}
as the norms in $H^1_g(\mathbb{S}^{n})$ and $L^{2^*}_g(\mathbb{S}^{n})$, respectively.

The $\phi$-equivariant solutions to the Yamabe equation \eqref{eq:1} are the critical points of the functional $J_n:H^1_g(\mathbb{S}^{n})^{\phi}\to\mathbb{R}$ given by
$$J_n(u)=\frac{1}{2}\|u\|^{2}-\frac{1}{2^*}|u|_{2^*}^{2^*}.$$
The nontrivial ones lie on the Nehari manifold
$$\mathcal{N}^{\phi}(\mathbb{S}^{n}):=\{u\in H_{g}^{1}(\mathbb{S}^{n})^{\phi}:u\neq0,\ \|u\|^{2}=|u|_{2^*}^{2^*}\}.$$
Set
$$c^{\phi}_n:=\inf_{u\in\mathcal{N}^{\phi}(\mathbb{S}^{n})}J_n(u).$$
Assumption $(A_0)$ implies that $\mathcal{N}^{\phi}(\mathbb{S}^{n})\neq\emptyset$. Therefore, $c^{\phi}_n\in\mathbb{R}$.
 
If $\phi\equiv 1$, then $u\equiv 1$ belongs to $\mathcal{N}^{\phi}(\mathbb{S}^{n})$ and minimizes $J_n$ on $\mathcal{N}^{\phi}(\mathbb{S}^{n})$. Hence,
\begin{equation} \label{eq:trivial}
c^{\phi}_n=\frac{n-2}{4}\mathrm{vol}(\mathbb{S}^{n})=:c_n\qquad\text{if }\phi\equiv 1.
\end{equation}
If $K$ is a closed subgroup of $\Gamma$ we write $\phi|K$ for the restriction of the homomorphism $\phi$ to $K$. Then we have that
$$c^{\phi}_n\geq c^{\phi|K}_n\geq c^{\phi|\{1\}}_n=c_n.$$

The following result gives conditions for the existence of a minimizer.

\begin{theorem} \label{thm:existence}
Assume $(A_0)$ holds true and there exists $q\in\mathbb{S}^{n}$ with $\Gamma_q=\Gamma$ (i.e., $q$ is a $\Gamma$-fixed point). If
\begin{equation} \label{eq:involved}
c^{\phi}_n<\min\{(\#\Gamma p)\,c^{\phi|\Gamma_p}_n:p\in\mathbb{S}^{n}\text{ and }\Gamma_p\neq\Gamma\},
\end{equation}
then there exists $u\in\mathcal{N}^{\phi}(\mathbb{S}^{n})$ such that $J_n(u)=c^{\phi}_n$,\, i.e., the Yamabe problem \eqref{eq:1} has a nontrivial least energy $\phi$-equivariant solution. This solution changes sign if $\phi$ is surjective.
\end{theorem}

\begin{proof}
After a change of coordinates, we may assume that $q=(0,\ldots,0,1)$. Then, $\Gamma$ acts trivially on the second factor of $\mathbb{R}^n\times\mathbb{R}\equiv\mathbb{R}^{n+1}$ and the stereographic projection from the point $q$ induces an orthogonal action of $\Gamma$ on $\mathbb{R}^n$.  It is well known that the Yamabe problem \eqref{eq:1} on the round sphere is equivalent to the problem
$$-\Delta v = a_n|v|^{2^*-2}v,\qquad v\in D^{1,2}(\mathbb{R}^n),$$
via the stereographic projection. So the statement follows from \cite[Theorem 3.3]{c}.
\end{proof}

Theorem \ref{thm:existence} is also true when $\mathbb{S}^{n}$ does not contain a $\Gamma$-fixed point, but it cannot be derived from \cite[Theorem 3.3]{c} and the proof requires some work. 

The symmetries we shall consider in this paper satisfy the following additional assumptions. We denote by $1$ the identity in $O(n+1)$.

\begin{itemize}
\item[$(A_1)$] Either $\Gamma_p=\Gamma$, or $\Gamma_p=\{1\}$, for any $p\in\mathbb{S}^{n}$.
\item[$(A_2)$] $\phi:\Gamma\to\mathbb{Z}_2$ is surjective.
\end{itemize}

Under these assumptions, condition \eqref{eq:involved} becomes considerably simpler, and a standard argument allows to extend Theorem \ref{thm:existence} to the case when $\mathbb{S}^{n}$ does not contain a $\Gamma$-fixed point. Namely, we have the following result.

\begin{corollary} \label{cor:existence}
Assume $(A_0)$, $(A_1)$ and $(A_2)$. If
\begin{equation} \label{eq:simple}
c^{\phi}_n <(\#\Gamma)\,c_n=\frac{n-2}{4}(\#\Gamma)\,\mathrm{vol}(\mathbb{S}^{n}),
\end{equation}
then the Yamabe problem \eqref{eq:1} has a nontrivial least energy $\phi$-equivariant solution. This solution changes sign.
\end{corollary}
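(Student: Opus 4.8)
The plan is to reduce Corollary~\ref{cor:existence} to Theorem~\ref{thm:existence} by showing that, under the extra hypotheses $(A_1)$ and $(A_2)$, the minimization inequality \eqref{eq:involved} collapses to the single inequality \eqref{eq:simple}, and then to deal separately with the two geometric situations: the presence or the absence of a $\Gamma$-fixed point on $\mathbb{S}^n$.

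First I would carry out the algebraic simplification. By $(A_1)$, every point $p$ appearing in the minimum in \eqref{eq:involved}, i.e. every $p$ with $\Gamma_p\neq\Gamma$, has trivial isotropy $\Gamma_p=\{1\}$. For such a point the orbit-stabilizer relation gives $\#\Gamma p=|\Gamma/\{1\}|=\#\Gamma$, while the restriction $\phi|\Gamma_p=\phi|\{1\}$ is the trivial homomorphism on the trivial group, so that $c_n^{\phi|\Gamma_p}=c_n^{\phi|\{1\}}=c_n$, the identity recorded just after \eqref{eq:trivial}. Consequently every term in the minimum equals $(\#\Gamma)\,c_n$, and \eqref{eq:involved} is literally the same statement as \eqref{eq:simple}. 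In particular, whenever a $\Gamma$-fixed point exists, condition \eqref{eq:simple} is exactly the hypothesis of Theorem~\ref{thm:existence}.

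Next I would treat the two cases. If $\mathbb{S}^n$ carries a $\Gamma$-fixed point, the conclusion is immediate from Theorem~\ref{thm:existence}, since under $(A_1)$ its hypothesis \eqref{eq:involved} is precisely \eqref{eq:simple}. If no $\Gamma$-fixed point exists, I would instead run the symmetric global compactness (Struwe-type) analysis directly on a minimizing sequence $(u_k)\subset\mathcal{N}^\phi(\mathbb{S}^n)$ for $c_n^\phi$. Such a sequence is bounded, so up to a subsequence $u_k\rightharpoonup u_0$ with $u_0$ a $\phi$-equivariant solution, and any loss of compactness is carried by finitely many $\Gamma$-orbits of bubbles concentrating at points $p_j$. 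By $(A_1)$ together with the absence of fixed points, each such $p_j$ has $\Gamma_{p_j}=\{1\}$, so its orbit consists of $\#\Gamma$ points and contributes at least $(\#\Gamma)\,c_n$ to the total energy; since $J_n(u_0)\geq 0$, a single nontrivial bubble orbit would already force $c_n^\phi\geq(\#\Gamma)\,c_n$, contradicting the strict inequality \eqref{eq:simple}. Hence no bubbling occurs, $u_k\to u_0$ strongly, $u_0\in\mathcal{N}^\phi(\mathbb{S}^n)$, and $J_n(u_0)=c_n^\phi$.

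Finally, the sign-change follows from $(A_2)$: choosing $\gamma$ with $\phi(\gamma)=-1$ and a point $p$ where the nontrivial minimizer $u_0$ does not vanish, equivariance gives $u_0(\gamma p)=-u_0(p)$, so $u_0$ takes both signs. I expect the genuine obstacle to lie in the no-fixed-point case, specifically in setting up the equivariant Struwe decomposition and verifying that the local bubble profile at a free orbit point really carries the full unconstrained energy $c_n$, so that one such orbit contributes at least $(\#\Gamma)\,c_n$. This is the step the excerpt flags as requiring some work, and it is exactly where the strict inequality \eqref{eq:simple} does its job; the algebraic reduction of \eqref{eq:involved} and the sign-change are routine by comparison.
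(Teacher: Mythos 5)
Your proposal is correct and follows essentially the same route as the paper: the hypotheses $(A_1)$--$(A_2)$ collapse the right-hand side of \eqref{eq:involved} to $(\#\Gamma)\,c_n$ so that Theorem \ref{thm:existence} applies when a $\Gamma$-fixed point exists, and the free-action case is handled by the equivariant concentration-compactness argument you sketch, which is exactly what the paper delegates to the proof of Theorem 2.2 in \cite{cf}. Your identification of the no-fixed-point case as the step requiring real work matches the paper's own remark.
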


\begin{proof}
If $\Gamma_p=\Gamma$ for every $p\in\mathbb{S}^n$, then $(A_0)$ implies $\phi\equiv 1$, contradicting $(A_2)$. So, by $(A_1)$, the right-hand side of \eqref{eq:involved} is $(\#\Gamma)\,c_n$ and the statement follows from Theorem \ref{thm:existence} if $\mathbb{S}^{n}$ contains a $\Gamma$-fixed point.

If $\mathbb{S}^{n}$ does not contain a $\Gamma$-fixed point, then $\Gamma$ acts freely on $\mathbb{S}^{n}$ and the same argument given to prove \cite[Theorem 2.2]{cf} yields this result.
\end{proof}

An immediate consequence of Corollary \ref{cor:existence} is the following fact.

\begin{corollary} \label{cor:infinite_orbits}
Assume $(A_0)$, $(A_1)$ and $(A_2)$. If $\#\Gamma=\infty$, then the Yamabe problem \eqref{eq:1} has a nontrivial least energy $\phi$-equivariant solution. This solution changes sign.
\end{corollary}

Next, we give some examples. We write $\mathbb{R}^{n+1}\equiv\mathbb{C}\times\mathbb{C}\times\mathbb{R}^{n-3}$, and the points in $\mathbb{R}^{n+1}$ as $(z_1,z_2,x)$ with $z_i\in\mathbb{C}$ and $x\in\mathbb{R}^{n-3}$. 

\begin{example} \label{ex:m}
For $m\geq 1$ let $G_m:=\{\mathrm{e}^{2\pi\mathrm{i}j/m}:j=0,\ldots,m-1\}$, $\Gamma_m$ be the group generated by $G_m\cup\{\tau\}$, acting on $\mathbb{R}^{n+1}$ as 
\begin{equation*}
\mathrm{e}^{2\pi\mathrm{i}j/m}(z_1,z_2,x):=(\mathrm{e}^{2\pi\mathrm{i}j/m}z_1,\mathrm{e}^{2\pi\mathrm{i}j/m}z_2,x),\qquad \tau(z_{1},z_{2},x):=(-\bar{z}_{2},\bar{z}_{1},x),
\end{equation*}
and $\phi_m:\Gamma_m\to\mathbb{Z}_{2}$ be the homomorphism given by $\phi_m(\mathrm{e}^{2\pi\mathrm{i}j/m}):=1$ and $\phi_m(\tau):=-1$. For any $p=(z_1,z_2,x)$, the $\Gamma_m$-orbit of $p$ is
    \begin{equation*}
	\Gamma_m p = 
	\begin{cases}
	\{p\} & \text{if }z_1=z_2=0, \\ 
	G_mp \,\sqcup\, G_m(\tau p) & \text{if either } z_1\neq 0,\text{ or }z_2\neq 0.
	\end{cases}
	\end{equation*}
Assumptions $(A_0)$, $(A_1)$ and $(A_2)$ are clearly satisfied.
\end{example}

\begin{example} \label{ex:infty}
Let $\Gamma_\infty$ be the group generated by $\{\mathrm{e}^{\mathrm{i}\vartheta}: \vartheta \in [0,2\pi)\}\cup\{\tau\}$, acting on $\mathbb{R}^{n+1}$ as
\begin{equation*}
\mathrm{e}^{\mathrm{i}\vartheta}(z_1,z_2,x):=(\mathrm{e}^{\mathrm{i}\vartheta}z_1,\mathrm{e}^{\mathrm{i}\vartheta}z_2,x),\qquad \tau(z_{1},z_{2},x):=(-\bar{z}_{2},\bar{z}_{1},x),
\end{equation*}
and let $\phi_\infty :\Gamma_\infty\to\mathbb{Z}_{2}$ be the homomorphism given by $\phi_\infty(\mathrm{e}^{\mathrm{i}\vartheta}):=1$ and $\phi_\infty(\tau):=-1$. Then, the assumptions of \emph{Corollary} \ref{cor:infinite_orbits} are satisfied. Hence, the Yamabe problem \eqref{eq:1} has a nontrivial least energy $\phi_\infty$-equivariant solution, which changes sign. For $n\geq 4$ this solution was exhibited in \emph{\cite[Theorem 1.1]{c}}.
\end{example}

For $\phi_m:\Gamma_m\to\mathbb{Z}_{2}$ as in Example \ref{ex:m}, Corollary \ref{cor:existence} yields the existence of a sign-changing solution to \eqref{eq:1}, whose energy is $c^{\phi_m}_n$, if the inequality
\begin{equation} \label{eq:m}
c^{\phi_m}_n<2m\,c_n
\end{equation}
is satisfied. As $\Gamma_m$ is a subgroup of $\Gamma_\infty$, we have that $c^{\phi_m}_n\leq c^{\phi_\infty}_n$ for all $m\in\mathbb{N}$. So \eqref{eq:m} holds true for sufficiently large $m$. On the other hand, as shown in \cite{w}, the least energy of a sign-changing solution to \eqref{eq:1} is strictly larger than $2c_n$, so \eqref{eq:m} is not satisfied for $m=1$.

In the next section we estimate the smallest $m$ for which \eqref{eq:m} holds true.

\section{Estimates for the energy of nodal solutions}

Let $\Gamma$ be a finite subgroup of $O(n+1)$ and $\phi:\Gamma\to\mathbb{Z}_{2}$ be a homomorphism satisfying $(A_0)$, $(A_1)$ and $(A_2)$. Fix $\hat{\gamma}\in\Gamma$ with $\phi(\hat{\gamma})=-1$ and write
\begin{equation*}
\Gamma=\{g_1,\ldots,g_m,\hat{\gamma}g_1,\ldots,\hat{\gamma}g_m\}\quad\text{with }g_1=1,\;\phi(g_i)=1\text{ for }i=1,\ldots,m.
\end{equation*}
The $\Gamma$-orbit of $p\in\mathbb{S}^n$ is
$$\Gamma p=\{p_1,\ldots,p_m,q_1,\ldots,q_m\}\qquad\text{with}\quad p_j:=g_jp\quad\text{and}\quad q_j:=\hat{\gamma}p_j.$$
We set
$$\mu_p:=\sum_{1\leq i\neq j\leq k}(1-\cos\mathrm{d}_g(p_i,p_j))^\frac{2-n}{2},\qquad\widehat{\mu}_p:=\sum_{1\leq i, j\leq k} (1-\cos \mathrm{d}_g(p_i,q_j))^{\frac{2-n}{2}},$$
where $\mathrm{d}_g(p,p')=\arccos\langle p,p'\rangle$ is the geodesic distance from $p$ to $p'$ on $\mathbb{S}^n$.

\begin{proposition}\label{prop:mu}
If $\mu_p - \widehat{\mu}_p>0$ for some $p\in\mathbb{S}^n$, then $c^{\phi}_n<2m\,c_n$.
\end{proposition}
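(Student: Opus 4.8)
The plan is to bound $c^\phi_n$ from above by evaluating $J_n$ along a well-chosen $\phi$-equivariant test function built from standard bubbles centered at the orbit points, and to show that the attraction between oppositely-signed bubbles pushes the value below $2mc_n$. Recall first that for any nontrivial $u\in H^1_g(\mathbb{S}^n)^\phi$ there is a unique $t_u>0$ with $t_u u\in\mathcal{N}^\phi(\mathbb{S}^n)$, and
$$J_n(t_u u)=\max_{t>0}J_n(tu)=\frac1n\frac{\|u\|^n}{|u|_{2^*}^n}=\frac1n\frac{(\|u\|^2)^{n/2}}{(|u|_{2^*}^{2^*})^{(n-2)/2}};$$
since $t_u u$ is again $\phi$-equivariant, it suffices to produce a $\phi$-equivariant $u$ for which this quantity is $<2mc_n=\frac{2m}{n}\|U\|^2$, where $U\equiv 1$ denotes the constant solution, so that $c_n=\frac1n\|U\|^2$.

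First I would fix a point $p$ with $\Gamma_p=\{1\}$ realizing $\mu_p-\widehat\mu_p>0$, so that its $\Gamma$-orbit consists of the $2m$ distinct points $p_1,\dots,p_m,q_1,\dots,q_m$, and introduce the standard family $U_{p,\varepsilon}$ of positive bubbles on $\mathbb{S}^n$: the conformal images of $U$ under dilations centered at $p$, which are rotationally symmetric about $p$ (hence depend only on $\mathrm{d}_g(\cdot,p)$) and concentrate at $p$ as $\varepsilon\to0$. Each $U_{p,\varepsilon}$ solves the Yamabe equation, so by conformal invariance of the energy $\|U_{p,\varepsilon}\|^2=|U_{p,\varepsilon}|_{2^*}^{2^*}=\|U\|^2=:A$. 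I then set
$$u_\varepsilon:=\sum_{i=1}^m U_{p_i,\varepsilon}-\sum_{j=1}^m U_{q_j,\varepsilon}.$$
Because every $\gamma\in\Gamma$ is an isometry, $U_{p,\varepsilon}\circ\gamma^{-1}=U_{\gamma p,\varepsilon}$; since $G=\ker\phi$ permutes $\{p_i\}$ and $\{q_j\}$ separately while any $\gamma$ with $\phi(\gamma)=-1$ interchanges these two sets, one checks directly that $u_\varepsilon(\gamma\,\cdot)=\phi(\gamma)u_\varepsilon$, i.e. $u_\varepsilon\in H^1_g(\mathbb{S}^n)^\phi$.

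Next I would expand both norms as $\varepsilon\to0$ using the standard two-bubble estimate: for distinct centers $a,b$,
$$\langle U_a,U_b\rangle:=\int_{\mathbb{S}^n}\big[\nabla_g U_a\cdot\nabla_g U_b+a_nU_aU_b\big]\,dV_g=\int_{\mathbb{S}^n}a_nU_a^{2^*-1}U_b\,dV_g\sim C_n\,\varepsilon^{n-2}\,(1-\cos\mathrm{d}_g(a,b))^{\frac{2-n}{2}},$$
with a universal constant $C_n>0$ (here the chordal distance gives $|a-b|^2=2(1-\cos\mathrm{d}_g(a,b))$). Writing $\epsilon_a=\pm1$ for the sign of the bubble at $a$, the same-sign $p$–$p$ interactions sum to $C_n\varepsilon^{n-2}\mu_p$; since $\mathrm{d}_g(q_i,q_j)=\mathrm{d}_g(p_i,p_j)$, the $q$–$q$ interactions again sum to $C_n\varepsilon^{n-2}\mu_p$; and the opposite-sign $p$–$q$ interactions contribute $-C_n\varepsilon^{n-2}\widehat\mu_p$ twice. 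Hence, with $B:=2C_n\varepsilon^{n-2}(\mu_p-\widehat\mu_p)=\sum_{a\neq b}\epsilon_a\epsilon_b\langle U_a,U_b\rangle+o(\varepsilon^{n-2})$,
$$\|u_\varepsilon\|^2=2mA+B+o(\varepsilon^{n-2}),\qquad |u_\varepsilon|_{2^*}^{2^*}=2mA+2^*B+o(\varepsilon^{n-2}).$$
Substituting into the formula above and expanding to first order in $B$ (using $\tfrac n2-\tfrac{n-2}2\,2^*=-\tfrac n2$) gives
$$c^\phi_n\le J_n(t_{u_\varepsilon}u_\varepsilon)=2mc_n-C_n\,\varepsilon^{n-2}(\mu_p-\widehat\mu_p)+o(\varepsilon^{n-2}).$$
Thus $\mu_p-\widehat\mu_p>0$ forces $c^\phi_n<2mc_n$ for all small $\varepsilon$, which is the assertion.

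I expect the main obstacle to be the rigorous justification of these two expansions, and in particular the control of the remainder in $|u_\varepsilon|_{2^*}^{2^*}=\int_{\mathbb{S}^n}a_n|u_\varepsilon|^{2^*}\,dV_g$, whose integrand is a power of a sign-changing sum: the Taylor expansion of $|s|^{2^*}$ is valid only in the region dominated by a single bubble, so one must estimate the transition zones where two bubbles are comparable and verify their contribution is $o(\varepsilon^{n-2})$. This is delicate for $n>6$, where $2^*<3$ and one cannot use a clean second-order expansion but must rely on the inequality $\big||s+r|^{2^*}-|s|^{2^*}-2^*|s|^{2^*-2}sr\big|\le C\big(|s|^{2^*-2}r^2+|r|^{2^*}\big)$ together with the cross-integral bounds $\int_{\mathbb{S}^n}U_a^{2^*-2}U_b^2\,dV_g=o(\varepsilon^{n-2})$. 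The quadratic part $\|u_\varepsilon\|^2$ and the value of the interaction constant $C_n$ follow from the by-now-standard computations for glued Aubin–Talenti bubbles in the flat model obtained by stereographic projection, which I would invoke rather than reproduce.
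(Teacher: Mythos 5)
Your proposal is correct and follows essentially the same route as the paper: a $\phi$-equivariant superposition of bubbles with signs $\pm 1$ on the orbit of $p$, projected onto the Nehari manifold, with the first-order interaction terms producing exactly $2C_n\varepsilon^{n-2}(\mu_p-\widehat\mu_p)$ and the exponent bookkeeping $\tfrac n2-\tfrac{n-2}{2}2^*=-\tfrac n2$ closing the argument. The only real difference lies in the $L^{2^*}$ term: since it sits in the denominator, the paper needs only a \emph{lower} bound and obtains it from the elementary inequality $|a+b|^{2^*}\geq a^{2^*}+2^*a^{2^*-1}b$ applied on disjoint balls around the orbit points with $a$ the dominant bubble, thereby bypassing entirely the two-sided remainder analysis you identify as the main obstacle.
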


To prove this proposition, we fix $p\in\mathbb{S}^n$ and, for each $\beta>1$, we define
\begin{equation*}
u_{\beta}(q):=(\beta^2-1)^{\frac{n-2}{4}}(\beta-\cos\mathrm{d}_g(p,q))^{-\frac{n-2}{2}},\qquad q\in\mathbb{S}^n.
\end{equation*}
The function $u_{\beta}$ is a positive least energy solution of the Yamabe equation \eqref{eq:1}. Hence,
$$J_n(u_\beta)=\frac{1}{n}\|u_\beta\|^2=\frac{1}{n}|u_\beta|^{2^*}=c_n=\frac{n-2}{4}\mathrm{vol}(\mathbb{S}^n).$$
We denote by $B_\delta(p)$ the geodesic ball of radius $\delta$ centered at $p$ in $\mathbb{S}^n$, and set $\omega_n:=\mathrm{vol}(\mathbb{S}^n)$.

\begin{lemma} \label{lem:estimate}
For any $f\in\mathcal{C}^0(\mathbb{S}^n)$ and $\delta\in (0,\pi)$, we have
\begin{align*}
&\int_{B_\delta(p)}fu_{\beta}^{2^*-1}\mathrm{d}V_g = \frac{2^{\frac{3n+2}{4}}\omega_{n-1}}{n}f(p)(\beta-1)^{\frac{n-2}{4}}+ o\left((\beta-1)^{\frac{n-2}{4}}\right)\quad\text{as }\beta\to 1,\\
&\int_{\mathbb{S}^n\smallsetminus B_\delta(p)}fu_{\beta}^{2^*-1}\mathrm{d}V_g = O\left((\beta-1)^{\frac{n+2}{4}}\right)\quad\text{for }\beta \text{ close to }1.
\end{align*}
\end{lemma}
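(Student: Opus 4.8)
The plan is to exploit that, because $2^*-1=\frac{n+2}{n-2}$, the integrand is explicit:
\[
u_\beta^{2^*-1}(q)=(\beta^2-1)^{\frac{n+2}{4}}\bigl(\beta-\cos\mathrm{d}_g(p,q)\bigr)^{-\frac{n+2}{2}}.
\]
As $\beta\to 1$ this mass concentrates at $p$, the only point where $1-\cos\mathrm{d}_g(p,q)=0$, so the two estimates amount to separating the concentration region $B_\delta(p)$ from its complement.

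The exterior estimate is the easy one. For $q\in\mathbb{S}^n\setminus B_\delta(p)$ we have $\cos\mathrm{d}_g(p,q)\le\cos\delta<1$, hence $\beta-\cos\mathrm{d}_g(p,q)\ge 1-\cos\delta>0$, so the factor $(\beta-\cos\mathrm{d}_g(p,q))^{-\frac{n+2}{2}}$ is bounded by the constant $(1-\cos\delta)^{-\frac{n+2}{2}}$. Since $f$ is continuous and hence bounded, the remaining integral stays bounded as $\beta\to 1$, and the prefactor $(\beta^2-1)^{\frac{n+2}{4}}=\bigl((\beta-1)(\beta+1)\bigr)^{\frac{n+2}{4}}=O\bigl((\beta-1)^{\frac{n+2}{4}}\bigr)$ gives the claimed bound.

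For the interior estimate I would pass to geodesic polar coordinates centred at $p$, writing $\mathrm{d}_g(p,q)=r$ and $\mathrm{d}V_g=(\sin r)^{n-1}\,dr\,d\sigma(\theta)$ with $\int_{\mathbb{S}^{n-1}}d\sigma=\omega_{n-1}$, and then rescale by $r=\sqrt{2(\beta-1)}\,t$. Using $\beta-\cos r=(\beta-1)+(1-\cos r)=(\beta-1)(1+t^2)+O\bigl((\beta-1)^2t^4\bigr)$, $\sin r=r\bigl(1+O(r^2)\bigr)$, and $(\beta^2-1)^{\frac{n+2}{4}}=2^{\frac{n+2}{4}}(\beta-1)^{\frac{n+2}{4}}(1+o(1))$, and tracking the powers of $\beta-1$ (the radial rescaling contributes $(\beta-1)^{-1}$, so the net exponent is $\frac{n+2}{4}-1=\frac{n-2}{4}$) and of $2$ (namely $2^{\frac{n+2}{4}}\cdot 2^{n/2}=2^{\frac{3n+2}{4}}$), one finds that the integral equals
\[
2^{\frac{3n+2}{4}}\,\omega_{n-1}\,f(p)\,(\beta-1)^{\frac{n-2}{4}}\int_0^\infty(1+t^2)^{-\frac{n+2}{2}}t^{n-1}\,dt+o\bigl((\beta-1)^{\frac{n-2}{4}}\bigr).
\]
An elementary computation (via the Beta function) gives $\int_0^\infty t^{n-1}(1+t^2)^{-\frac{n+2}{2}}\,dt=\frac1n$, which turns the constant into $\frac{2^{\frac{3n+2}{4}}\omega_{n-1}}{n}$, as required.

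The main obstacle is the rigorous justification of this last limit: one must control uniformly, in the rescaled variable $t$, the errors incurred by replacing $f(\exp_p(r\theta))$ by $f(p)$, $1-\cos r$ by $\tfrac{r^2}{2}$, and $\sin r$ by $r$, and confirm that the resulting remainder and the cut-off tail are genuinely $o\bigl((\beta-1)^{\frac{n-2}{4}}\bigr)$. The clean way to do this is dominated convergence after rescaling: the rescaled integrand is dominated by $C\,t^{n-1}(1+t^2)^{-\frac{n+2}{2}}\in L^1(0,\infty)$ uniformly for small $\beta-1$, while continuity of $f$ yields pointwise convergence to the $f(p)$-integrand.
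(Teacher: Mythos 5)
Your argument is correct and reaches the stated constants, but it takes a different computational route from the paper. For the interior term the paper applies the stereographic projection $\sigma$ from $p$, under which both $u_\beta$ and the volume element become explicit rational functions of $|x|$, and then substitutes $x=\sqrt{\tfrac{\beta+1}{\beta-1}}\,\tfrac{\xi}{\rho}$; no Taylor expansion of the metric is needed, and the limit reduces at once to $n\int_0^\infty\rho^{n-1}(1+\rho^2)^{-\frac{n+2}{2}}\mathrm{d}\rho=1$ (your Beta-function identity $\int_0^\infty t^{n-1}(1+t^2)^{-\frac{n+2}{2}}\mathrm{d}t=\tfrac1n$ is the same fact). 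You instead use geodesic polar coordinates with the parabolic rescaling $r=\sqrt{2(\beta-1)}\,t$, which requires controlling the errors from $\sin r\approx r$ and $1-\cos r\approx \tfrac{r^2}{2}$; your proposed domination works because $1-\cos r\geq \tfrac{2}{\pi^2}r^2$ on $[0,\pi]$, so the rescaled integrand is bounded by $C\,t^{n-1}(1+ct^2)^{-\frac{n+2}{2}}\in L^1(0,\infty)$ uniformly in $\beta$, and dominated convergence plus continuity of $f$ at $p$ gives the limit. Your bookkeeping of the powers of $(\beta-1)$ and of $2$ is right, and your exterior estimate (bounding $\beta-\cos\mathrm{d}_g(p,q)\geq 1-\cos\delta$ off $B_\delta(p)$) is essentially identical to the paper's. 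In short: the paper's stereographic change of variables buys an exact computation with no remainder terms to estimate, while your normal-coordinate rescaling is more elementary and self-contained at the price of a (correctly handled) dominated-convergence step.
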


\begin{proof}
Let $\sigma:\mathbb{S}^n\smallsetminus\{p\}\to\mathbb{R}^n$ be the stereographic projection. Then,
$$|x|=\cot\frac{r}{2}\quad\text{ and }\quad r=\arccos\left(\frac{|x|^2-1}{|x|^2+1}\right),\quad\text{ if }x=\sigma(q),\;r:=\mathrm{d}_g(p,q).$$
The pullback of the round metric in the local coordinates $\sigma^{-1}:\mathbb{R}^n\to\mathbb{S}^n\smallsetminus\{p\}$ is $(\sigma^{-1})^*g=\frac{4}{(1+|x|^2)^2}\bar{g}$, where $\bar{g}$ is the Euclidean metric. Writing $x=\sqrt{\frac{\beta +1}{\beta -1}}\frac{\xi}{\rho}$ with $\xi\in\mathbb{S}^{n-1}$, we obtain
\begin{align*}
&\int_{B_\delta(p)}fu_{\beta}^{2^*-1}\mathrm{d}V_g
= \int_{\sigma(B_\delta(p))}(fu_{\beta}^{2^*-1})(\sigma^{-1}(x))\frac{2^n}{(1+|x|^2)^n}\mathrm{d}x \\
&\; = 2^n(\beta^2-1)^{\frac{n+2}{4}} \int_{\{|x|>\cot\frac{\delta}{2}\}}\frac{f(\sigma^{-1}(x))}{((\beta-1)|x|^2+\beta+1)^{\frac{n+2}{2}}(1+|x|^2)^{\frac{n-2}{2}}}\mathrm{d}x\\
&\;= 2^n\left(\frac{\beta-1}{\beta+1}\right)^{\frac{n-2}{4}}\int_{\mathbb{S}^{n-1}} \int_0^{\sqrt{\frac{\beta+1}{\beta-1}}\tan \frac{\delta}{2}} \frac{f\left(\sigma^{-1}(\sqrt{\frac{\beta+1}{\beta-1}}\frac{\xi}{\rho})\,\rho^{n-1}\right)}{(\rho^2+1)^{\frac{n+2}{2}}(\frac{\beta-1}{\beta+1}\rho^2+1)^{\frac{n-2}{2}}}\mathrm{d}\rho\,\mathrm{d}V_{g_{n-1}},
\end{align*}
where $g_{n-1}$ is the round metric on $\mathbb{S}^{n-1}$. As $n\int_0^\infty\rho^{n-1}(1+\rho^2)^{-\frac{n+2}{2}}\mathrm{d}\rho=1$, we deduce that
$$\lim_{\beta\to 1}\frac{1}{(\beta-1)^{\frac{n-2}{4}}}\int_{B_\delta(p)}fu_{\beta}^{2^*-1}\mathrm{d}V_g=2^{\frac{3n+2}{4}}\,\omega_{n-1}\,\frac{f(p)}{n}.$$
This is the first statement. The second one can be obtained easily, since 
\begin{equation*}
\lim_{\beta\to\infty}\frac{1}{(\beta-1)^{\frac{2+n}{4}}}\int_{\mathbb{S}^n\smallsetminus B_\delta(p)}fu_{\beta}^{2^*-1}\mathrm{d}V_g =2^{\frac{n+2}{4}}\int_{\mathbb{S}^n\smallsetminus B_\delta(p)}f(1-\cos r)^{-\frac{n+2}{4}}\mathrm{d}V_g.
\end{equation*}
This completes the proof.
\end{proof}

\begin{proof}[Proof of Proposition \ref{prop:mu}]
We fix $p\in\mathbb{S}^n$ with $\Gamma_p=\{1\}$. For each $\beta>1$ we set
$$u_{j,\beta}:=u_\beta\circ g_j^{-1},\qquad u_{m+j,\beta}:=u_\beta\circ (\hat{\gamma}g_j)^{-1},\qquad j=1,\ldots,m,$$
and we define
$$w_\beta:=\sum_{j=1}^m(u_{j,\beta}-u_{m+j,\beta}).$$
Since $(A_0)$, $(A_1)$ and $(A_2)$ hold true and $\Gamma_p=\{1\}$, we have that $w_\beta\neq 0$. Hence, there exists $t_\beta\in (0,\infty)$ such that $t_\beta w_\beta\in\mathcal{N}^\phi(\mathbb{S}^n)$, and
\begin{equation*}
c_n^\phi\leq J_n(t_\beta w_\beta)=\frac{1}{n}[Y_n(w_\beta)]^{n/2}, \qquad\text{where } Y_n(u):=\frac{\|u\|^2}{|u|^{2}_{2^*}}
\end{equation*}
and $\|u\|$ and $|u|_{2^*}$ are the norms defined in \eqref{eq:norms}. 

Since $u_{j,\beta}$ solves \eqref{eq:1}, using Lemma \ref{lem:estimate} we estimate
\begin{align*}
\|w_\beta\|^2 &=a_n\sum_{i,j=1}^m \int_{\mathbb{S}^n}[u_{i,\beta}u_{j,\beta}^{2^*-1}+u_{i+m,\beta}u_{j+m,\beta}^{2^*-1}-u_{i,\beta}u_{j+m,\beta}^{2^*-1}-u_{i+m,\beta}u_{j,\beta}^{2^*-1}]\mathrm{d}V_g\\
&= a_n\sum_{i,j=1}^m
\int_{\mathbb{S}^n}(u_{i,\beta}\circ g_{j}+u_{i+m,\beta}\circ\hat{\gamma}g_j-u_{i,\beta}\circ\hat{\gamma}g_j-u_{i+m,\beta}\circ g_{j})u_{\beta}^{2^*-1}\mathrm{d}V_g \\
&= 2ma_n\omega_n+ \frac{2^{n+1}a_n\omega_{n-1}}{n} (\mu_p-\widehat{\mu}_p)(\beta-1)^{\frac{n-2}{2}}+o(\beta-1)^{\frac{n-2}{2}},
\end{align*}
We choose $\delta>0$ such that  $B_\delta(q)\cap B_\delta(q')=\emptyset$ for all points $q,q'\in \Gamma p$ with $q\neq q'$. Then,
\begin{align*}
\int_{\mathbb{S}^n}|w_\beta|^{2^*}\mathrm{d}V_g &\geq \sum_{j=1}^{m}\int_{B_\delta (p_j)}|u_{j,\beta}+\sum_{i\neq j}u_{i,\beta}-\sum_{i}u_{i+m,\beta}|^{2^*}\mathrm{d}V_g\\
&\quad+\sum_{j=1}^{m}\int_{B_\delta (q_j)} |u_{j+m,\beta}+\sum_{i\neq j}u_{i+m,\beta}-\sum_{i}u_{i,\beta}|^{2^*}\mathrm{d}V_g\\
& \geq  2m\int_{B_\delta (p)} u_{\beta}^{2^*}\mathrm{d}V_g + 2^*\int_{B_\delta(p)}\biggl(\sum_{1\leq i\neq j\leq m}u_{i,\beta}\circ g_j + u_{i+m,\beta}\circ\hat{\gamma}g_j \\
&\quad -\sum_{1\leq i,j\leq m}u_{i+k,\beta}\circ g_j + u_{i,\beta}\circ \tilde{\gamma}g_j\biggr) u_\beta^{2^*-1}\mathrm{d}V_g\\
&\geq 2m\omega_n+\frac{2^{n+2}}{n-2}\omega_{n-1}(\mu_p-\widehat{\mu}_p)(\beta-1)^{\frac{n-2}{2}}+o\left((\beta-1)^{\frac{n-2}{4}}\right),
\end{align*}
where we used the inequality $|a+b|^p\geq a^p+pa^{p-1}b$ for $a\geq 0$, $b\in\mathbb{R}$ and $p\geq 1$, and Lemma \ref{lem:estimate}. Thus,
\begin{align*}
|w_\beta|_{2^*}^{-2}\leq &(2ma_n\omega_n)^{\frac{2-n}{n}}-\frac{2^{n+2}a_n^{\frac{2-n}{n}}\omega_{n-1}}{n}(2m\omega_n)^{\frac{2-2n}{n}}(\mu_p-\widehat{\mu}_p)(\beta-1)^{\frac{n-2}{2}}\\
&+o\left((\beta-1)^{\frac{n-2}{4}}\right).
\end{align*}
We conclude that
\begin{equation*}
Y_n(w_\beta)\leq (2ma_n\omega_n)^\frac{2}{n}-C_{n,k}(\mu_p-\widehat{\mu}_p) (\beta-1)^{\frac{n-2}{2}}+o\left((\beta-1)^{\frac{n-2}{4}}\right),
\end{equation*}
where $C_{n,k}:=\frac{2^{n+1}a_n\omega_{n-1}}{n}(2ma_n\omega_n)^{\frac{2-n}{n}}$.

If $\mu_p-\widehat{\mu}_p>0$, then $Y_n(w_\beta)<(2ma_n\omega_n)^\frac{2}{n}=(2mnc_n)^\frac{2}{n}$ for $\beta>1$ sufficiently close to $1$. Therefore,
\begin{equation*}
c_n^\phi\leq J_n(t_\beta w_\beta)=\frac{1}{n}[Y_n(w_\beta)]^{n/2}<2mc_n\qquad\text{for }\beta\text{ sufficiently close to }1.
\end{equation*}
This concludes the proof.
\end{proof}

\section{The proof of the main result}

Next, we compute the sign of $\mu_p - \widehat{\mu}_p$ for $\phi_m:\Gamma_m\to\mathbb{Z}_2$ as in Example \ref{ex:m}. 

\begin{lemma}\label{lem:mu}
Let $\phi_m:\Gamma_m\to\mathbb{Z}_2$ be as in \emph{Example \ref{ex:m}} and let $p=(1,0,0)\in\mathbb{C}\times\mathbb{C}\times\mathbb{R}^{n-3}\equiv\mathbb{R}^{n+1}$. Then, $\mu_p - \widehat{\mu}_p>0$ if and only if $m\geq m_n$, with $m_n$ as in \eqref{eq:m(n)}.
\end{lemma}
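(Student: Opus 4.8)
The plan is to make the geometry of the orbit of $p=(1,0,0)$ completely explicit and then reduce the sign question for $\mu_p-\widehat\mu_p$ to a finite inequality in $m$. First I would compute the orbit. The rotation $\mathrm{e}^{2\pi\mathrm{i}j/m}$ sends $p=(1,0,0)$ to $p_{j}=(\mathrm{e}^{2\pi\mathrm{i}j/m},0,0)$, so the $m$ points $p_1,\dots,p_m$ are the vertices of a regular $m$-gon lying on the unit circle in the $z_1$-plane. The reflection $\tau(z_1,z_2,x)=(-\bar z_2,\bar z_1,x)$ sends $p_j=(\mathrm{e}^{2\pi\mathrm{i}j/m},0,0)$ to $q_j=(0,\mathrm{e}^{-2\pi\mathrm{i}j/m},0)$, i.e.\ a congruent regular $m$-gon lying in the \emph{orthogonal} $z_2$-plane. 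This is the decisive structural feature: the $p$-points and the $q$-points live in two mutually orthogonal $2$-planes of $\mathbb{R}^{n+1}$, which makes all the needed inner products trivial to write down.

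Next I would convert everything into inner products, since $\mathrm{d}_g(a,b)=\arccos\langle a,b\rangle$ gives $1-\cos\mathrm{d}_g(a,b)=1-\langle a,b\rangle$. For two $p$-points I get $\langle p_i,p_j\rangle=\cos\frac{2\pi(i-j)}{m}$, so each summand of $\mu_p$ is $\bigl(1-\cos\frac{2\pi(i-j)}{m}\bigr)^{\frac{2-n}{2}}$; summing over $i\neq j$ and using that the difference $i-j\bmod m$ ranges over $1,\dots,m-1$ exactly $m$ times, I obtain
\begin{equation*}
\mu_p=m\sum_{\ell=1}^{m-1}\Bigl(1-\cos\tfrac{2\pi\ell}{m}\Bigr)^{\frac{2-n}{2}}
=m\sum_{\ell=1}^{m-1}\Bigl(2\sin^2\tfrac{\pi\ell}{m}\Bigr)^{\frac{2-n}{2}}.
\end{equation*}
For the mixed term, orthogonality of the two planes forces $\langle p_i,q_j\rangle=0$ for \emph{all} $i,j$, hence $1-\cos\mathrm{d}_g(p_i,q_j)=1$ and every one of the $m^2$ summands equals $1$, giving simply $\widehat\mu_p=m^2$. (One must also remember the notational caveat that the formulas in the text use $k$ where the orbit here has $k=m$ points in each half.) Therefore the sign of $\mu_p-\widehat\mu_p$ is the sign of
\begin{equation*}
F(m):=m\sum_{\ell=1}^{m-1}\Bigl(2\sin^2\tfrac{\pi\ell}{m}\Bigr)^{\frac{2-n}{2}}-m^2,
\end{equation*}
and I want to show $F(m)>0$ exactly when $m\ge m_n$.

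Finally I would analyze $F(m)>0$, i.e.\ $S_n(m):=\sum_{\ell=1}^{m-1}\bigl(2\sin^2\frac{\pi\ell}{m}\bigr)^{\frac{2-n}{2}}>m$. Dividing by $2^{(2-n)/2}$, the pairing $\ell\leftrightarrow m-\ell$ (which leaves $\sin\frac{\pi\ell}{m}$ invariant) lets me work with the symmetric sum, and since the exponent $\frac{2-n}{2}<0$ the dominant contributions come from the terms nearest $\ell=0$ and $\ell=m$, where $\sin\frac{\pi\ell}{m}$ is smallest. I expect the sum to be increasing in $m$ once $n$ is fixed, so there is a threshold value $m_n$ past which the inequality holds; the content is to verify that this threshold is exactly the tabulated $m_n$. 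The cleanest route is to isolate the two smallest-angle terms $\ell=1$ and $\ell=m-1$, each contributing $\bigl(2\sin^2\frac{\pi}{m}\bigr)^{\frac{2-n}{2}}=\bigl(2\sin^2\frac{\pi}{m}\bigr)^{-\frac{n-2}{2}}$, which blows up like a constant times $m^{\,n-2}$ as $m$ grows, so for large $n$ even moderate $m$ suffices (explaining $m_n=5$ for $n\ge 7$), whereas for small $n$ the growth is slow and one needs larger $m$ (giving $9,7,6,6$ for $n=3,4,5,6$).

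The main obstacle will be the \emph{sharp threshold} verification: ruling out $m=m_n-1$ while confirming $m=m_n$, separately in each of the low-dimensional cases $n=3,4,5,6$ and uniformly for $n\ge 7$. This is delicate because near the transition $S_n(m)$ and $m$ are comparable, so crude bounds on the trigonometric sum will not decide the sign. I would handle $n\ge 7$ by a monotonicity-in-$n$ argument — since $\bigl(2\sin^2\frac{\pi\ell}{m}\bigr)^{\frac{2-n}{2}}\ge 1$ for each $\ell$ (as $2\sin^2\frac{\pi\ell}{m}\le 2\le$ the relevant bound only after care, so more precisely each term is at least $1$ when $2\sin^2\frac{\pi\ell}{m}\le 1$ and the negative exponent makes small-angle terms large), reducing to checking the fixed value $m=5$ at $n=7$ and noting the sum only increases with $n$. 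For $n=3,4,5,6$ I would simply evaluate $S_n(m)$ numerically at $m=m_n-1$ and $m=m_n$ using exact or rational values of $\sin^2\frac{\pi\ell}{m}$ and compare against the integer $m$, which is a finite and fully rigorous computation. I would also need the elementary monotonicity statement that $S_n(m)-m$ is increasing in $m$ for each fixed $n$, so that $m\ge m_n\Rightarrow F(m)>0$ follows from the single threshold check rather than requiring verification at every larger $m$.
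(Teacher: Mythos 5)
Your computation of the orbit and of the two sums is correct and coincides with the paper's: $p_j=(\mathrm{e}^{2\pi\mathrm{i}j/m},0,0)$, $q_j=(0,\mathrm{e}^{-2\pi\mathrm{i}j/m},0)$ lies orthogonal to every $p_i$, hence $\mu_p=m\sum_{\ell=1}^{m-1}(2\sin^2\frac{\pi\ell}{m})^{\frac{2-n}{2}}$ and $\widehat\mu_p=m^2$, and the lemma reduces to deciding the sign of $a_{n,m}=\sum_{\ell=1}^{m-1}(\sqrt2\sin\frac{\pi\ell}{m})^{-(n-2)}-m$. The gaps are in your plan for that sign analysis, and they are genuine. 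First, the asserted ``elementary monotonicity'' of $S_n(m)-m$ in $m$ is false in general: for $n=4$ one has the closed form $S_4(m)=\frac{m^2-1}{6}$, so $S_4(2)-2=-\frac32$ while $S_4(3)-3=-\frac53$, i.e.\ the quantity \emph{decreases} from $m=2$ to $m=3$. Since your entire scheme of checking only $m=m_n-1$ and $m=m_n$ rests on this monotonicity (for the ``only if'' direction at $m<m_n-1$ and for the ``if'' direction at $m>m_n$), the argument does not close; note also that the ``if'' direction involves infinitely many values of $m$, so it cannot be delegated to finite numerics. Second, your monotonicity-in-$n$ reduction for $n\geq 7$ is justified by the claim that each summand is $\geq 1$, which fails for the middle indices $\frac{m}{4}<\ell<\frac{3m}{4}$, where $2\sin^2\frac{\pi\ell}{m}>1$ and the term therefore \emph{decreases} in $n$; and even granting monotonicity in $n$, the single check $(n,m)=(7,5)$ does not cover all $m\geq 5$ without the missing monotonicity in $m$.

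For comparison, the paper avoids both monotonicity claims. For $2\leq m\leq 4$ it uses $\sqrt2\sin\frac{\pi}{m}\geq 1$ to bound every summand by $1$, giving $a_{n,m}\leq -1$ for all $n\geq 3$ at once. For $m\geq 5$ it discards all but the two extreme terms $\ell=1,m-1$ and uses $\sqrt2\sin\frac{\pi}{m}<1$ to get the uniform lower bound $a_{n,m}>2(\sqrt2\sin\frac{\pi}{m})^{-n_0}-m$ for $n\geq n_0+2$, reducing an infinite family in $(n,m)$ to the sign of $f_{n_0}(x)=(2x)^{1/n_0}-\sqrt2\sin(\pi x)$ on an interval; the few cases this misses ($a_{5,5},a_{5,6},a_{6,5}$) are computed directly. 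The case $n=4$ is settled by the exact identity $a_{4,m}=\frac{m^2-1}{6}-m$, and $n=3$ by direct checks for $m=5,\dots,8$ together with a carefully proved recursive increment $a_{3,m+1}\geq a_{3,m}+\sqrt2\bigl(\frac12-\frac{\sqrt2}{2}+\frac{1}{\sin\frac{\pi}{m+1}}-\frac{1}{\sin\frac{\pi}{m}}\bigr)$ valid from $m=9$ on. To repair your proposal you would need to replace the two monotonicity claims by estimates of this kind (or prove precise, restricted versions of them, e.g.\ monotonicity in $m$ only for $m\geq 5$ and in $n$ only for the specific $m$ needed, which is exactly where the real work of the lemma lies).
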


\begin{proof}
We have that $p_j=(\mathrm{e}^{2\pi\mathrm{i}j/m},0,0)$ and that $q_j:=\tau p_j$ is orthogonal to $p_i$ for all $i,j=1,\ldots,m-1$. Therefore,
$$\mu_p=m\sum_{j=1}^{m-1}\left(1-\cos\frac{2\pi j}{m}\right)^\frac{2-n}{2}\qquad\text{and}\qquad\widehat{\mu}_p=m^2.$$
As $1-\cos\frac{2\pi j}{m}=2\sin^2(\frac{\pi j}{m})$, we get that
$$\frac{1}{m}(\mu_p-\widehat{\mu}_p)=\sum_{j=1}^{m-1}\left(\frac{1}{\sqrt{2}\sin\frac{\pi j}{m}}\right)^{n-2}-m=:a_{n,m}.$$
If $2\leq m\leq 4$, then $\sqrt{2}\sin\frac{\pi}{m}\geq \sqrt{2}\sin\frac{\pi}{4}=1$. Hence,
\begin{equation*}
a_{n,m}\leq \frac{m-1}{(\sqrt{2}\sin\frac{\pi}{m})^{n-2}}-m\leq -1\quad \forall n\geq 3.
\end{equation*}
If $m\geq 5$, then $\sqrt{2}\sin\frac{\pi}{m}\leq \sqrt{2}\sin\frac{\pi}{5}<1$. Hence,
\begin{equation*}
a_{n,m}>\frac{2}{(\sqrt{2}\sin\frac{\pi}{m})^{n-2}}-m\geq \frac{2}{(\sqrt{2}\sin\frac{\pi}{m})^{n_0}}-m\quad\text{if }n\geq n_0+2.
\end{equation*}
We have that
\begin{equation*}
\frac{2}{(\sqrt{2}\sin\frac{\pi}{m})^{n_0}}-m> 0
\end{equation*}
if, either $n_0=5$ and $m\geq 5$, or $n_0=4$ and $m\geq 6$, or $n_0=3$ and $m\geq 7$. Indeed, setting $x:=\frac{1}{m}$ and $f_{n_0}(x):=(2x)^{1/n_0}-\sqrt{2}\sin(\pi x)$, looking at the graph of $f_{n_0}$ (see Figure \ref{fig}) and computing its value at $\frac{1}{5},\frac{1}{6},\frac{1}{7}$ respectively, we get that
\begin{equation*}
(2x)^{1/n_0}-\sqrt{2}\sin(\pi x)> 0\quad
\begin{cases}
\text{if }n_0=5\text{ and }0\leq x\leq \frac{1}{5},\\
\text{if }n_0=4\text{ and }0\leq x\leq \frac{1}{6},\\
\text{if }n_0=3\text{ and }0\leq x\leq \frac{1}{7}.
\end{cases}
\end{equation*}
On the other hand, we have that
\begin{align*}
a_{5,6} &= \frac{1}{2^{3/2}}\left(\frac{2}{(\sin\frac{\pi}{6})^3}+\frac{2}{(\sin\frac{\pi}{3})^3}+1\right)-6\approx 1.09907,\\
a_{5,5} &= \frac{1}{2^{3/2}}\left(\frac{2}{(\sin\frac{\pi}{5})^3}+\frac{2}{(\sin\frac{2\pi}{5})^3}\right)-5\approx -0.69601,\\
a_{6,5} &= \frac{1}{4}\left(\frac{2}{(\sin\frac{\pi}{5})^4}+\frac{2}{(\sin\frac{2\pi}{5})^4}\right)-5=-\frac{1}{5}.
\end{align*}
This completes the proof for $n\geq 5$.
\begin{figure}[!ht]
\begin{center}
\includegraphics[width=.8\linewidth]{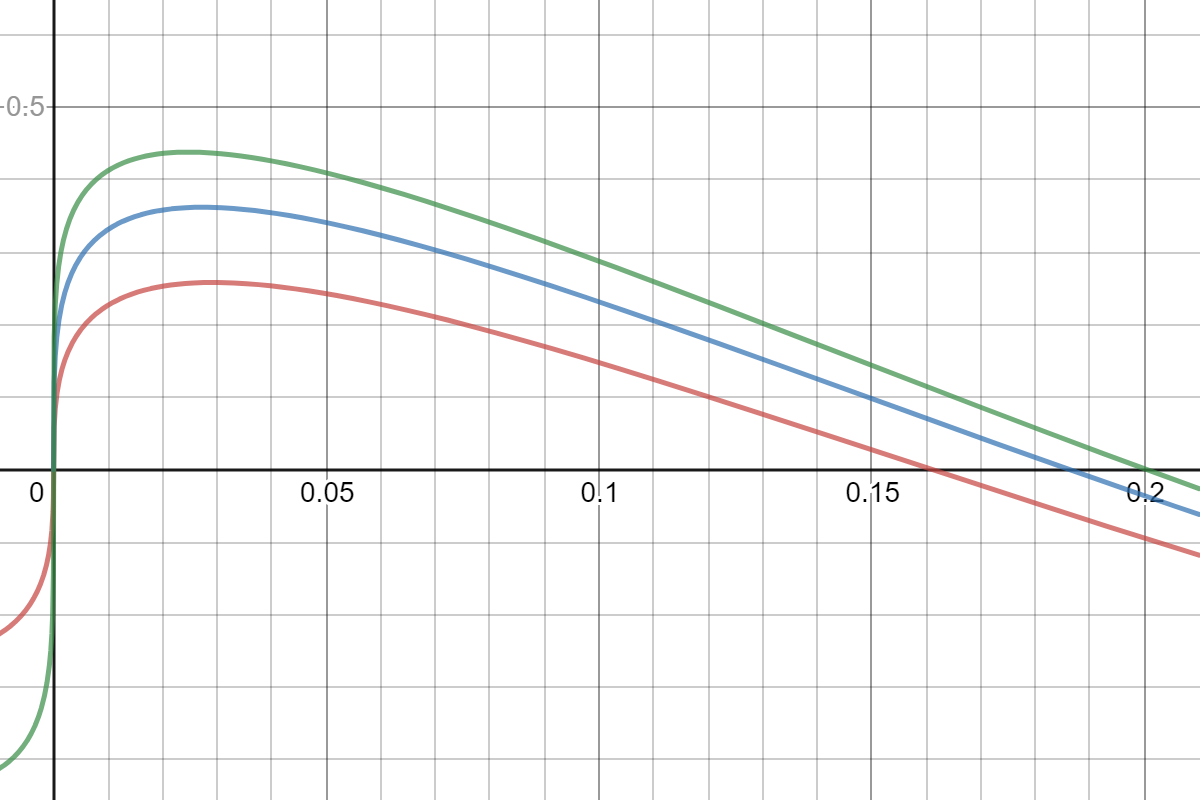}
\end{center}
\caption{The graph of $f_{n_0}$ for $n_0=3,4,5$.}\label{fig}
\end{figure}

If $n=4$, then $a_{4,m}=\frac{1}{6}(m^2-1)-m$. Hence, $a_{4,m}>0$ if and only if $m\geq 7$. 

If $n=3$, direct calculations show that $a_{3,m}<0$ if $m=5,6,7,8$. Note that $t\mapsto\sin t$ is increasing if $t\in [0,\frac{\pi}{2}]$. So, for $m$ even we have that 
\begin{align*}
a_{3,m+1}&=\sum_{j=1}^{m/2}\frac{2}{\sqrt{2}\sin\frac{\pi j}{m+1}}-m-1\geq \sum_{j=2}^{m/2}\frac{2}{\sqrt{2}\sin\frac{\pi j}{m}}-m-1+\frac{2}{\sqrt{2}\sin\frac{\pi}{m+1}}\\
&=\sum_{j=1}^{\frac{m}{2}-1}\frac{2}{\sqrt{2}\sin\frac{\pi j}{m}}+\frac{2}{\sqrt{2}}-m-1+\frac{2}{\sqrt{2}\sin\frac{\pi}{m+1}}-\frac{2}{\sqrt{2}\sin\frac{\pi}{m}}\\
&=a_{3,m}+\frac{2}{\sqrt{2}}\left(\frac{1}{2}-\frac{\sqrt{2}}{2}+\frac{1}{\sin\frac{\pi}{m+1}}-\frac{1}{\sin\frac{\pi}{m}}\right).
\end{align*} 
A similar computation shows that, also for $m$ odd, 
$$a_{3,m+1}\geq a_{3,m}+\frac{2}{\sqrt{2}}\left(\frac{1}{2}-\frac{\sqrt{2}}{2}+\frac{1}{\sin\frac{\pi}{m+1}}-\frac{1}{\sin\frac{\pi}{m}}\right).$$
We claim that
\begin{equation}\label{eq:claim}
\frac{1}{\sin\frac{\pi}{m+1}}-\frac{1}{\sin\frac{\pi}{m}}>\frac{\sqrt{2}-1}{2}\qquad\forall m\geq 9.
\end{equation}
If this is true, then $a_{3,m}>0$ for all $m\geq 9$, and the proof of the lemma is complete. To prove \eqref{eq:claim} note that, since $\frac{t(6-t^2)}{6}=t-\frac{t^3}{6}\leq \sin t\leq t$,
\begin{align*}
\frac{1}{\sin\frac{\pi}{m+1}}-\frac{1}{\sin\frac{\pi}{m}}&\geq \frac{m+1}{\pi}-\frac{6}{\frac{\pi}{m}(6-(\frac{\pi}{m})^2}\\
&=\frac{1}{\pi}-\left(\frac{\frac{\pi}{m}}{6-(\frac{\pi}{m})^2}\right)\geq \frac{1}{\pi}-\left(\frac{\frac{\pi}{9}}{6-(\frac{\pi}{9})^2}\right)\qquad\forall m\geq 9.
\end{align*}
A direct calculation gives 
$$\frac{\frac{\pi}{9}}{6-(\frac{\pi}{9})^2}\approx 0.059383<0.111203\approx \frac{1}{\pi}-\frac{\sqrt{2}-1}{2},$$
which yields \eqref{eq:claim}.
\end{proof}

\begin{proof}[Proof of Theorem \ref{thm:main}]
It follows from Lemma \ref{lem:mu}, Proposition \ref{prop:mu} and Corollary \ref{cor:existence}.
\end{proof}

To conclude, we show that our solutions are different from those of Ding \cite{d}. We write $\mathbb{R}^{n+1}\equiv\mathbb{C}\times\mathbb{R}^{k-2}\times\mathbb{C}\times\mathbb{R}^{m-2}$ with $k,m\geq 2$ and $k+m=n+1$ and, accordingly, we write the points in $\mathbb{R}^{n+1}$ as $(z_1,x_1,z_2,x_2)$.

\begin{proposition} \label{prop:noding}
Let $n>3$. If $u:\mathbb{S}^n\to\mathbb{R}$ is $[O(k)\times O(m)]$-invariant and
$$u(z_1,x_1,z_2,x_2)=-u(-\bar{z}_2,x_1,\bar{z}_1,x_2)\qquad\forall (z_1,x_1,z_2,x_2)\in\mathbb{R}^{n+1},$$ then $u\equiv 0$.
\end{proposition}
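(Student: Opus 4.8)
The plan is to use the $[O(k)\times O(m)]$-invariance to reduce $u$ to a function of a single scalar, and then to read the antisymmetry relation as a functional equation that forces this function to vanish identically.

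First I would observe that $O(k)$ acts transitively on the spheres of $\mathbb{R}^k=\mathbb{C}\times\mathbb{R}^{k-2}$ (the $(z_1,x_1)$-factor) and $O(m)$ on those of $\mathbb{R}^m=\mathbb{C}\times\mathbb{R}^{m-2}$ (the $(z_2,x_2)$-factor). Hence the invariance forces $u$ to depend only on the two radii $r_1^2:=|z_1|^2+|x_1|^2$ and $r_2^2:=|z_2|^2+|x_2|^2$; since $r_1^2+r_2^2=1$ on $\mathbb{S}^n$, there is a function $F:[0,1]\to\mathbb{R}$ with $u(z_1,x_1,z_2,x_2)=F(|z_1|^2+|x_1|^2)$. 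Writing $\tau(z_1,x_1,z_2,x_2):=(-\bar z_2,x_1,\bar z_1,x_2)$, the $\mathbb{R}^k$-component of $\tau p$ is $(-\bar z_2,x_1)$, of squared norm $|z_2|^2+|x_1|^2$, so the hypothesis $u\circ\tau=-u$ becomes the functional equation
$$F(|z_2|^2+|x_1|^2)=-F(|z_1|^2+|x_1|^2),\qquad (z_1,x_1,z_2,x_2)\in\mathbb{S}^n.$$
Note that no continuity of $u$ is needed, since this is a pointwise identity.

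The core observation is that $\tau$ fixes every point with $z_1=z_2=0$, whence $F(|x_1|^2)=-F(|x_1|^2)=0$ at all such points. When $k,m\geq 3$ the coordinates $x_1\in\mathbb{R}^{k-2}$ and $x_2\in\mathbb{R}^{m-2}$ are both genuinely present, so for any $t\in[0,1]$ I may take $z_1=z_2=0$, $|x_1|^2=t$, $|x_2|^2=1-t$; this yields $F(t)=0$ for all $t$, hence $u\equiv 0$. The role of the hypothesis $n>3$ enters exactly here: $k+m=n+1\geq 5$ rules out $k=m=2$, so at least one of the two real factors is nonempty.

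The remaining work, which I expect to be the only delicate point, is the bookkeeping in the degenerate cases $k=2$ (so $x_1$ is absent) or $m=2$ (so $x_2$ is absent), where not every value of $t$ is reachable by a $\tau$-fixed configuration and one must chain the functional equation through an already-known zero of $F$. If $k=2$, then $m=n-1\geq 3$ and the equation reads $F(|z_2|^2)=-F(|z_1|^2)$; taking $z_1=z_2=0$ gives $F(0)=0$, and then $z_2=0$, $|z_1|^2=t$, $|x_2|^2=1-t$ gives $F(t)=-F(0)=0$. If $m=2$, then $k=n-1\geq 3$ and the equation reads $F(|z_2|^2+|x_1|^2)=-F(|z_1|^2+|x_1|^2)$ with $z_2,x_1$ free; choosing $|z_1|^2=|z_2|^2$ forces $F(t)=0$ for $t\in[\tfrac12,1]$, and then choosing $|z_1|^2=1-t$, $|x_1|^2=t$, $z_2=0$ forces $F(t)=-F(1)=0$ for the remaining $t\in[0,\tfrac12)$. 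In every case $F\equiv 0$, i.e. $u\equiv 0$, as claimed.
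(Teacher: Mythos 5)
Your proof is correct and follows essentially the same route as the paper's: reduce $u$ via the $[O(k)\times O(m)]$-invariance to a function of the radii, then exploit the configurations with $z_1=z_2=0$ where the antisymmetry forces the value to vanish, and propagate through the functional equation. The only cosmetic difference is that the paper assumes $k\le m$ without loss of generality and therefore treats only the degenerate case $k=2$, whereas you handle $k=2$ and $m=2$ separately (your two-step chaining through $F(1)=0$ in the case $m=2$ is the mirror image of the paper's argument for $k=2$).
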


\begin{proof}
Without loss of generality, we may assume that $k\leq m$. Since $u$ is $[O(k)\times O(m)]$-invariant it can be  written as
$$u(z_1,x_1,z_2,x_2)=w(|(z_1,x_1)|,|(z_2,x_2)|).$$
Then, for every $(z_1,x_1,z_2,x_2)\in\mathbb{S}^n$, we have that 
$$w(|(z_1,x_1)|,|(z_2,x_2)|)=-w(|(z_2,x_1)|,|(z_1,x_2)|)$$
and, taking $z_1=z_2=0$, we get that 
$$w(|x_1|,|x_2|)=-w(|x_1|,|x_2|)\qquad\forall (x_1,x_2)\in\mathbb{R}^{k-2}\times\mathbb{R}^{m-2}.$$
If $k>2$, this implies that $w\equiv 0$.

On the other hand, if $k=2$ then $m>2$ and, taking $z_1=z_2=0$, we get that $w(0,1)=-w(0,1)=0$. Setting $z_1=0$ we conclude that 
$$0=w(0,1)=-w(|z_2|,|x_2|)\qquad\forall (z_2,x_2)\in\mathbb{C}\times\mathbb{R}^{m-2}.$$
Hence, $w\equiv 0$. 
\end{proof}

 \vspace{15pt}

\begin{flushleft}
\textbf{Mónica Clapp}\\
Instituto de Matemáticas\\
Universidad Nacional Autónoma de México\\
Circuito Exterior, Ciudad Universitaria\\
04510 Coyoacán, Ciudad de México \\
Mexico\\
\texttt{monica.clapp@im.unam.mx} \vspace{10pt}

\textbf{Angela Pistoia}\\
Dipartimento SBAI\\
La Sapienza Università di Roma\\
Via Antonio Scarpa 16 \\
00161 Roma \\
Italy\\
\texttt{angela.pistoia@uniroma1.it}\vspace{10pt}

\textbf{Tobias Weth}\\
Institut für Mathematik \\
Goethe-Universität Frankfurt \\
Robert-Mayer-Str. 10 \\
D-60629 Frankfurt am Main \\
Germany \\
\texttt{weth@math.uni-frankfurt.de}

\end{flushleft}

\end{document}